\documentclass[11pt]{amsart}

\usepackage{amsmath,amssymb,amsthm}

\theoremstyle{plain}

\newtheorem{Theo}{Theorem}

\sloppy

\usepackage{fancyhdr}
\usepackage{amsfonts,graphicx}
\newtheorem{Theorem}[Theo]{Theorem }

 \newtheorem{Lemm}[Theo]{Lemma}

 \newtheorem{Rema}[Theo]{Remark }
\newcommand{\numberfield}[1]{\mathbb{#1}}
 \newcommand{\er}{\numberfield{R}}
\newcommand{\g}{\|_{{ L}^{2}} }

\newcommand{\z}{\|_{{\rm L}^{\frac{4}{d}+2}} }

\newcommand{\hi}{{{ H}^{1}} }
\newcommand{\hs}{{{ H}^{1}} }
\newcommand{\as}{,\qquad{\rm as}\quad n\to\infty}

\sloppy
 \newcommand{\ti}{\frac{4}{d}}
 \newcommand{\tit}{\frac{4}{d}+2}
\newcommand{\y}{\longrightarrow}

  \def\lap{\Delta}
 \parindent 0pt 
\begin{document}
\title[ $ L^2$ concentration phenomenon ] {$ L^2$ concentration of blow-up solutions for the mass-critical NLS with inverse-square potential }
\author{ Abdelwahab Bensouilah}

\address{
Laboratoire Paul Painlev\'e (U.M.R. CNRS 8524), U.F.R. de Math\'ematiques, Universit\'e Lille 1, 59655 Villeneuve d'Ascq Cedex, France
}
\email{\sl ai.bensouilah@math.univ-lille1.fr}

\date{March 2018}
\keywords{mass-critical NLS, inverse-square potential, blow-up, $L^2$ concentration.  }
\subjclass[2010]{35Q55, 35B44, 35C06. }

\begin{abstract}
In this paper, we prove a refined version of a compactness lemma and we use it  to  establish mass-concentration for the focusing nonlinear
Schr\"odinger equation with an inverse-square potential.  
\end{abstract}
\maketitle

\section{Introduction}

We consider
   the  following ${ L}^2$-critical nonlinear  Schr\"odinger equation (NLS) with an attractive inverse-square potential:
\begin{equation}
\label{1}
\begin{cases}
&i\partial_{t} u +\lap u +\frac{c}{|x|^2} u+|u|^{\frac{4}{d}}u=0,\qquad x\in\er^d,\, t>0,\\
&  u(0,x)=u_0(x),
\end{cases}
\end{equation}

with $d\geq 3$ and $c \in (0,c_*)$, where $c_*=\frac{(d-2)^2}{4}$ is the best constant in Hardy's inequality:
\begin{equation}
c_* \int_{\er^d} \frac{|u|^2}{|x|^2} \, dx \leq \int_{\er^d} |\nabla u|^2 \, dx, \hspace{1cm} u \in \hs(\er^d).
\end{equation}

The Schr\"odinger equation \eqref{1} appears in a variety of physical settings, such as quantum field equations or black hole solutions of the Einstein's equations \cite{KSW}.\\

As in the classical case, i.e., with $c=0$, \eqref{1} is invariant under the scaling$$
u \rightarrow u_{\lambda}: (t,x) \mapsto \lambda^{d/2} u(\lambda^2 t,\lambda x), \quad \lambda>0,$$
that is why the equation is called $L^2$-critical.\\
We have also invariance under time-translation and phase shift. However, the strict positivity of the parameter $c$ breaks the space-translation symmetry as well as the Galilean transformation.\\
A recent result of Okazawa, Suzuki and Yokota \cite{OSY}  shows that the Cauchy problem \eqref{1}
 is locally well-posed in  $\hs$: there exists $T^*\in(0,+\infty]$ and a maximal solution $u\in {\mathcal C}([0,T^*),\hs)$. Moreover, we have the following blow-up alternative: either  $T^*=\infty$ (the solution is global)  or  $T^*<+\infty$ (the solution blows up in finite time) and $$
\lim_{t\uparrow T^*}\|u(t,\cdot)\|_{\hs}=+\infty.
$$
The unique solution has the following conserved quantities:
\begin{eqnarray}
 \nonumber
{\mathcal M}(t)&:=&\int_{\mathbb R^d}|u(t,x)|^2dx={\mathcal M}(0),
\\
\nonumber
\mathcal{E}(t)&:=&\frac{1}{2}\int_{\er^d} \vert\nabla u\vert^2dx-\frac{c}{2}\int_{\er^d} \frac{|u|^2}{|x|^2}dx-\frac{d}{4+2d}\int_{\er^d} \vert u\vert^{\ti+2}dx\\
\nonumber
&=& \mathcal{E}(0).
\end{eqnarray}
From the definition of the energy, we see that it is convenient to introduce the following Hardy functional
$$H(u):=\int\vert\nabla u\vert^2dx-c\int\frac{|u|^2}{|x|^2}dx.$$
The hypothesis on the parameter $c$ implies that $H$ defines a semi-norm on $\hs$ equivalent to $\|\nabla u\|_2$. In particular, $u$ blows up at $T^*>0$ if and only if $\lim_{t \to T^*}H(u(t))=\infty$.\\

The blow-up theory for \eqref{1} is mainly connected to the notion of  ground state, which is a non-zero, non-negative and radially symmetric $\hs$-solution of the
elliptic problem
\begin{equation}
\label{gs equation}
\Delta Q+\frac{c}{|x|^2}Q - Q+|Q|^{\frac{4}{d}}Q=0.
\end{equation}
The existence of ground state solutions to \eqref{gs equation} was recently obtained in \cite{KMVZZ,KMVZZ2, CG} via Weinstein's variational approach, but unlike the standard problem ( i.e., $c=0$ ) where the ground state is unique ( up to the symmetries ), we do not know if it is the case when $c\in (0, c_*)$.\\
In addition, the authors in \cite{CG} exhibited  the following precised Gagliardo-Nirenberg inequality: for all $\psi\in \hi$
\begin{equation}
\label{dd}
\|\psi\|_{L^{\ti+2}}^{\ti+2}\leq C_d H(\psi) \|\psi\|_{{ L}^2} ^{\frac{4}{d}} ,
\end{equation}
where $C_d:=\frac{d+2}{d}{ \|Q\|_{{ L}^2}^{-\frac{4}{d}} }$.\\
With this estimate in hand, one can prove that the $L^2$-norm of the ground state is the mass threshold for the formation of singularities. Besides, all solutions to \eqref{1} with a mass equal to that of a ground state are all equal to a ground state up to the symmetries.\\
We note that most of the previously mentioned phenomena ( singularity formation, universality of the blow-up profile, etc. ) were settled first for the standard problem and there is an abundant literature on that. We refer the interested reader to \cite{Ca}.\\

Our aim here is to establish a concentration result for solutions to
(NLS) with an inverse-square potential. That is, blowing-up solutions to \eqref{1} concentrates a minimal amount of mass, or more precisely 
\begin{Theo}
\label{concen}
Denote by $Q$ a ground state solution to \eqref{gs equation}. Let $u$  be  a  solution of  \eqref{1} which blows up  at finite time $T^*>0$, and $a(t)>0$ any function, such that $a(t)\|\nabla   u(t)\g \longrightarrow +\infty$ as $t \uparrow T^*$. Then, there  exists $x(t)\in\er^d$, such that 
$$
\liminf_{t\uparrow T^*}\int_{\{|x-x(t)|\leq  a(t)\}}|u(t,x)|^2dx\geq\int_{\er^d} Q^2.
$$
\end{Theo}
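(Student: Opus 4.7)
The plan follows the standard rescaling-and-profile-extraction strategy for mass-critical NLS, with the paper's refined compactness lemma replacing the classical Hmidi-Keraani statement. First I would fix a sequence $t_n \uparrow T^*$ realizing the $\liminf$, and set
\begin{equation*}
\rho_n := \frac{\|\nabla Q\|_{L^2}}{\|\nabla u(t_n)\|_{L^2}} \xrightarrow[n \to \infty]{} 0,
\end{equation*}
by the blow-up alternative. Introduce the rescaled sequence $v_n(x) := \rho_n^{d/2}\, u(t_n,\rho_n x)$; a direct change of variables yields $\|v_n\|_{L^2} = \|u_0\|_{L^2}$, $\|\nabla v_n\|_{L^2} = \|\nabla Q\|_{L^2}$, and $H(v_n) = \rho_n^2 H(u(t_n))$. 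Since $H$ and $\|\nabla\cdot\|_{L^2}^2$ are equivalent seminorms on $\hs$, the sequence $(v_n)$ is bounded in $\hs$ and $H(v_n)$ remains in a compact subinterval of $(0,\infty)$.

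Using conservation of energy together with the scaling identity $\|v_n\|_{L^{\ti+2}}^{\ti+2} = \rho_n^2 \|u(t_n)\|_{L^{\ti+2}}^{\ti+2}$, one derives
\begin{equation*}
\|v_n\|_{L^{\ti+2}}^{\ti+2} = \tfrac{d+2}{d}\,\rho_n^2 H(u(t_n)) - \tfrac{4+2d}{d}\,\rho_n^2 \mathcal{E}(u_0).
\end{equation*}
Up to a subsequence, the right-hand side converges to a strictly positive limit $\gamma > 0$, since the first term stays bounded below and the second vanishes with $\rho_n$.

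Next I would feed $(v_n)$ to the refined compactness lemma of the paper: it supplies translations $x_n \in \er^d$ and a profile $V \in \hs$ such that, along a further subsequence, $v_n(\cdot + x_n) \rightharpoonup V$ weakly in $\hs$, with the quantitative lower bound $\|V\|_{L^2}^{2} \geq \int_{\er^d} Q^2$. The heuristic is that $(v_n)$ asymptotically saturates the Gagliardo-Nirenberg inequality \eqref{dd}, so its concentration profile must carry at least a ground state's worth of mass. Setting $x(t_n) := \rho_n x_n$, Rellich-Kondrachov upgrades weak convergence to strong $L^2_{\mathrm{loc}}$ convergence, and a change of variables on $\{|x|\leq R\}$ gives
\begin{equation*}
\int_{|y-x(t_n)|\leq \rho_n R} |u(t_n,y)|^2\, dy = \int_{|x|\leq R} |v_n(x+x_n)|^2\, dx \xrightarrow[n\to\infty]{} \int_{|x|\leq R} |V|^2\, dx.
\end{equation*}
The hypothesis $a(t)\|\nabla u(t)\|_{L^2} \to \infty$ is exactly $\rho_n R/a(t_n) \to 0$ for each fixed $R$, so $\{|y-x(t_n)|\leq \rho_n R\} \subset \{|y-x(t_n)|\leq a(t_n)\}$ for $n$ large; taking $\liminf$ in $n$ and then letting $R\to\infty$ completes the argument.

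The main obstacle is the correct invocation of the refined compactness lemma. Unlike the classical translation-invariant case, the potential $c/|x|^2$ breaks translation symmetry, so when one shifts $v_n$ by $x_n$ (which need not stay bounded) the Hardy term in the limit profile's equation can degenerate. The refined lemma must therefore be built so that this defect is either absorbed in the limit or controlled by a sharp translation-invariant Gagliardo-Nirenberg bound, while still delivering the threshold $\|Q\|_{L^2}^{2}$. Once the lemma is in place, everything else is routine bookkeeping around the rescaling.
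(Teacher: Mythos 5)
Your proposal is correct and follows essentially the same route as the paper: rescale by a length comparable to $\|\nabla u(t_n)\|_{L^2}^{-1}$, use energy conservation to bound $\|v_n\|_{L^{\frac{4}{d}+2}}$ from below, invoke the refined compactness lemma to extract a profile with $\|V\|_{L^2}\geq\|Q\|_{L^2}$, and conclude via $\rho_n/a(t_n)\to 0$. The only cosmetic difference is your normalization $\rho_n=\|\nabla Q\|_{L^2}/\|\nabla u(t_n)\|_{L^2}$ versus the paper's $\rho_n=(H(Q)/H(u(t_n)))^{1/2}$ (which makes $H(v_n)=H(Q)$ exactly); after passing to a subsequence where $\rho_n^2H(u(t_n))$ converges, your choice yields the same sharp constant.
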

\begin{Rema}
Results of this type where firstly obtained for equation \eqref{1} with $c=0$ in \cite{W, MT, Ts}.
\end{Rema}
\begin{Rema} Adapting the arguments in \cite{Merle}, one could establish the following lower bound on the blow-up rate for blowing-up solutions 
$$
\|\nabla   u(t,\cdot)\g\geq \frac{C}{\sqrt{T^*-t}}.
$$
Thus, any function $a(t)>0$, such that $\frac{\sqrt{T^*-t}}{a(t)}\longrightarrow 0$ as $t \uparrow T^*$, fulfills the conditions of the above theorem.
\end{Rema}
The paper is organized as follows. In section 2 we prove a compactness lemma adapted to equation \eqref{1}. In section 3, we apply the aforementioned lemma to prove our main result, Theorem \ref{concen}. We conclude the paper with an appendix.
\section{Compactness tools}
This section is devoted to the proof of our key result which is crucial in establishing the $L^2$ concentration phenomenon for solutions to \eqref{1}. It is equivalent to the concentration-compactness lemma used in \cite{CG}, but expressed in terms of $\hs$-profiles.
\begin{Theorem}
\label{prop4} 
Let ${\bf v}=\{   v_n\}_{n=1}^\infty$ be a bounded sequence  in  $\hi(\er^d)$. Then, there exist a subsequence 
 of $\{   v_n\}_{n=1}^\infty$ {\rm (}still denoted $\{   v_n\}_{n=1}^\infty${\rm )}, a family  $\{
  {\bf x}^{j}  \}_{j=1}^\infty $ of sequences in $\er^d$   and a sequence $\{V^{j}\}_{j=1}^\infty  $ of $\hi$-functions, such that 
\begin{itemize} 
\item[i)] for every $k\not=j,\quad
 | x_{n}^{k}- 
  x_{n}^{j}|     \underset{n\to\infty}
{ \longrightarrow} +\infty$; 
\item[ii)] for every $\ell\geq1$ and
 every $x\in \er^d$, we have
\begin{equation}
  \nonumber
     v_n(x)=\sum_{j=1}^{\ell}V^{j}(x- x_{n}^{j})+   v_n^{\ell}(x),
 \end{equation}
with
\begin{equation}
\label{ss}
\limsup_{n\to
  \infty}\|   v_n^{\ell}\|_{L^{p}(\er^d)}
\underset{\ell\to\infty}{\longrightarrow }0
\end{equation}
 for  every $p\in]2,2^*[$.
\end{itemize}
 Moreover, we have, as $n\to+\infty$, 
\begin{eqnarray}
  \label{ee}
 \|   v_n\g^2=\sum_{j=1}^{\ell} \|V^{j}\g^2+\|   v_n^{\ell}\g^2+o_n(1)
\end{eqnarray}
and
\begin{eqnarray}
\label{hh}
H(v_n)= \sum_{j=1}^{l} H(V^j(\cdot-x_n^j))+H(v_n^l)+ o_n(1).
\end{eqnarray}
\end{Theorem}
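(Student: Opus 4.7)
The plan is to follow the classical profile-decomposition scheme of G\'erard for bounded sequences in $\hs(\er^d)$, and then to check the new algebraic identity \eqref{hh} involving the Hardy functional. Most of the construction is standard; the novelty lies in tracking the singular weight $|x|^{-2}$ through the decomposition.

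First, I would extract profiles iteratively. Setting $v_n^0 := v_n$, at stage $\ell$ I would choose $x_n^{\ell+1}$ so that $v_n^\ell(\cdot + x_n^{\ell+1})$ realises (up to a factor) the largest possible weak-$\hs$ limit among all translates of $v_n^\ell$, call this weak limit $V^{\ell+1}$, and set $v_n^{\ell+1} := v_n^\ell - V^{\ell+1}(\cdot - x_n^{\ell+1})$. Standard arguments then yield (i), namely $|x_n^j - x_n^k| \to \infty$ for $j\neq k$: were $x_n^k - x_n^j$ bounded along a subsequence, the corresponding weak limit would already have been captured at stage $j$. The Lions vanishing lemma gives \eqref{ss} for every $p \in (2, 2^*)$ once one verifies that $\sup_{y}\|v_n^\ell\|_{L^2(B_1(y))}\to 0$ as $n,\ell \to \infty$. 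Using the weak convergence $v_n^\ell(\cdot + x_n^j)\rightharpoonup 0$ in $\hs$ for $j\leq \ell$ together with the asymptotic orthogonality of the translates, the $L^2$ identity \eqref{ee} and the parallel identity $\|\nabla v_n\|_{L^2}^2 = \sum_{j=1}^{\ell}\|\nabla V^j\|_{L^2}^2 + \|\nabla v_n^\ell\|_{L^2}^2 + o_n(1)$ follow by standard Pythagoras-type computations.

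The genuinely new content is \eqref{hh}. Since the gradient part of $H$ already decomposes, it is enough to show
\begin{equation*}
\int_{\er^d}\frac{|v_n|^2}{|x|^2}\,dx = \sum_{j=1}^{\ell}\int_{\er^d}\frac{|V^j(x - x_n^j)|^2}{|x|^2}\,dx + \int_{\er^d}\frac{|v_n^\ell|^2}{|x|^2}\,dx + o_n(1).
\end{equation*}
After passing to a further subsequence, for every $j$ either $x_n^j \to x_*^j$ (bounded case) or $|x_n^j|\to\infty$; the orthogonality (i) forces at most one index $j_0$ to be of the bounded type, which I would absorb into $V^{j_0}$ by translation, so WLOG $x_n^{j_0}\equiv 0$. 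After expanding $|v_n|^2$ the diagonal terms match; the cross terms must be shown to be $o_n(1)$. For the purely profile-profile crossings $\int V^j(\cdot - x_n^j)\overline{V^k(\cdot - x_n^k)}/|x|^2$ with $j\neq k$, Cauchy-Schwarz reduces the estimate to $\int|V^j(\cdot - x_n^j)|^2/|x|^2$; this vanishes whenever $|x_n^j|\to\infty$, as one sees by splitting $V^j = \chi_R V^j + (1-\chi_R)V^j$ with $\chi_R$ a smooth cutoff on $|y|\leq R$: the compactly supported piece contributes at most $\|V^j\|_{L^2}^2/(|x_n^j|-R)^2 \to 0$ for each fixed $R$, while the tail is bounded uniformly in $n$ via Hardy by $c_*^{-1}\|\nabla((1-\chi_R)V^j)\|_{L^2}^2$, which tends to $0$ as $R\to\infty$. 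Since at most one of $x_n^j, x_n^k$ is bounded, every such cross term vanishes.

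The main obstacle lies in the mixed remainder terms $\int V^j(\cdot - x_n^j)\overline{v_n^\ell}/|x|^2$. For $|x_n^j|\to\infty$ they are handled just as above via Cauchy-Schwarz and the uniform Hardy estimate on $v_n^\ell$. For the single index $j_0$ (with $x_n^{j_0}\equiv 0$), the crucial input is that $v_n^\ell \rightharpoonup 0$ weakly in $\hs$, hence by Rellich-Kondrachov strongly in $L^2_{\mathrm{loc}}$. I would split the integral into the three regions $\{|x|\leq \delta\}$, $\{\delta < |x|\leq R\}$, $\{|x| > R\}$: the inner region is controlled by Cauchy-Schwarz combining the smallness of $\int_{|x|\leq\delta}|V^{j_0}|^2/|x|^2$ as $\delta\to 0$ with the uniform Hardy bound on $v_n^\ell$; the middle region by $\delta^{-2}\|V^{j_0}\|_{L^2}\|v_n^\ell\|_{L^2(B_R)}\to 0$ as $n\to\infty$; the outer region by $R^{-2}\|V^{j_0}\|_{L^2}\|v_n^\ell\|_{L^2}\to 0$ as $R\to\infty$. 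Sending $n\to\infty$ first, then $R\to\infty$, and finally $\delta\to 0$ in this order completes the argument.
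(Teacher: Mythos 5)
Your proposal is correct and follows essentially the same route as the paper: the standard iterative profile extraction, with the only genuinely new work being the asymptotic orthogonality of the Hardy term, which you, like the paper, establish by distinguishing the case $|x_n^j|\to\infty$ from the case of a bounded translation and then combining Cauchy--Schwarz, Hardy's inequality, and the smallness of $\int_{\{|x|<\delta\}}|V|^2/|x|^2\,dx$ near the singularity with local strong convergence away from it. The differences are cosmetic: you expand the full sum and treat all cross terms at once where the paper telescopes two-term identities, you use a cutoff of $V^j$ instead of a density reduction to compactly supported profiles, and you invoke Rellich--Kondrachov where the paper uses its appendix lemma on the weak convergence of $|v_n^\ell|$.
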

\begin{proof}
Let $ \mathcal{V}({\bf v})$ be the set of functions  obtained as weak limits in
  $\hi$ of subsequences of the 
  translated $   v_n(.+x_n)$ with $\{x_n\}_{n=1}^\infty\subset  \er^d$. Set
 \begin{equation}
   \nonumber
   \eta({\bf v})=\sup\{\|V\|_{\hi},\quad V \in\mathcal{V}({\bf v}) \}.
\end{equation}
Clearly
\begin{equation}
 \nonumber
\eta({\bf v})\leq \limsup_{n\to\infty}\|   v_n\|_{\hi}.
\end{equation}
We claim the existence of  a sequence $\{ V ^{j}\}_{j=1}^\infty$ of
$\mathcal{V}({\bf v})$ and a family
$\{{\bf x}^{j}\}_{j=1}^\infty$  of sequences of $\er^d$,
such that
\begin{equation}
\nonumber
  k \not= j \Rightarrow
  |x_{n}^{k }-x_{n}^{j
  }|\underset{n\to\infty}{\longrightarrow} \infty,
\end{equation}
and, up to extracting a subsequence, the sequence $\{   v_n\}_{n=1}^\infty$ can
be written as 
\begin{equation}
  \nonumber
     v_n(x)=\sum_{j =1}^{\ell} V^{j}(x- x_{n}^{j})+   v_n^{\ell}(x),
\quad \eta({\bf v}^{\ell})   \underset{\ell\to\infty}{\longrightarrow }0,
\end{equation}
such that the  identities \eqref{ee}-\eqref{hh} hold.
Indeed, if $
\eta({\bf v})=0$, one can take $V^{j}\equiv 0$ for all $j$,
otherwise one chooses $V^{1}\in\mathcal{V}({\bf v})$, such that
\begin{equation}
  \nonumber
 \Vert V^1\|_{\hi}\geq\frac{1}{2}\eta({\bf v})>0. 
\end{equation}
By definition, there exists some sequence   
${\bf x}^1=\{x_{n}^{1} \}_{n=1}^\infty$  of $\er^d$, such that, up to extracting a subsequence,
we have
\begin{equation}
   \nonumber
    v_n(\cdot+x^1_n)\rightharpoonup V^{1}  \qquad {\rm weakly } \quad {\rm in} \quad \hs. 
\end{equation}
Define 
\begin{equation}
   \nonumber
     v_n^{1}:=   v_n- V^{1}(\cdot-x^1_n).
\end{equation}
Since  $   v_n^1(\cdot+x_n^1) \rightharpoonup 0$, we get
 \begin{eqnarray}
   \nonumber
   \|    v_n\g^2 &= &  \|V^1\g^2 +\|     v_n^{1}\g^2+o(1),\\
\nonumber
   \|\nabla      v_n\g^2 &= &  \|\nabla   V^1\g^2 +\|  \nabla      v_n^{1}\g^2+o(1)\as. 
\end{eqnarray}
It remains to show the following identity
$$\int\frac{|v_n(x)|^2}{|x|^2} \, dx=\int\frac{|V^1(x-x_n^1)|^2}{|x|^2}\, dx+\int\frac{|v_n^1(x)|^2}{|x|^2}\, dx+o(1)\as.$$
We have

\[|v_n(x)|^2=|V^1(x-x_n^1)|^2+|v_n^1(x)|^2+2\mathcal{R}[V^1(x-x_n^1)\bar{v}_n^1(x)],\]
where $\mathcal{R}(z)$ denotes the real part of the complex number $z$. Thus, it suffices to prove that
\begin{equation}
\label{main}
\int_{\er^d} \frac{V^1(x-x_n^1)\bar{v}_n^1(x)}{|x|^2}\, dx \longrightarrow 0 \as.
\end{equation}
Without loss of generality, we suppose that $V^1$ is continuous and compactly supported in $\bar{B}(0,R)$, $R>0$.
We distinguish two cases:
\begin{itemize}
\item Case 1: $|x_n^1|\longrightarrow\infty$.
\end{itemize}
We have
\[\int_{\er^d} \frac{V^1(x-x_n^1)\bar{v}_n^1(x)}{|x|^2}\, dx=\int_{\bar{B}(0,R)}\frac{V^1(x)\bar{v}_n^1(x+x_n^1)}{|x+x_n^1|^2}\, dx.\]
Since $|x_n^1|\longrightarrow\infty$, there exists $n(R) \in \mathbb{N}^*$ such that for all $n\geq n(R) $
\[|x_n^1|\geq 2R.\]
Therefore, for all $n\geq n(R) $ and all $x \in \bar{B}(0,R)$
\[|x_n^1+x| \geq R,\]
and then
\[\int_{\bar{B}(0,R)}\frac{|V^1(x)||v_n^1(x+x_n^1)|}{|x+x_n^1|^2}\, dx \leq \frac{1}{R^2} \int |V^1(x)| |v_n^1(x+x_n^1)| \, dx.\]
The right-hand side term tends to zero as $n$
tends to infinity, since $|v_n^1(x+x_n^1)|\rightharpoonup 0$ in $H^1$ ( see appendix for a proof ).
\begin{itemize}
\item Case 2: Up to extracting a subsequence, we assume that $x_n^1\longrightarrow x^1$ for some $x^1 \in \mathbb{R}^d$. It suffices to study the case when $x^1=0$.
\end{itemize}
Let $\epsilon>0$. By the dominated convergence theorem,
there exists $\delta(\epsilon)>0$, such that
\begin{equation}
\label{tcd}
\int_{B(0,2\delta(\epsilon))} \frac{|V^1(x)|^2}{|x|^2} \leq \frac{\epsilon^2}{2}.
\end{equation}
 Now, write
\begin{align*}
\bigg|\int_{\mathbb{R}^d}\frac{V^1(x)\bar{v}_n^1(x+x_n^1)}{|x+x_n^1|^2}\, dx \bigg|&\leq \int_{B(0,\delta(\epsilon))}\frac{|V^1(x)||v_n^1(x+x_n^1)|}{|x+x_n^1|^2}\, dx \\
&+\int_{B^{\textit{c}}(0,\delta(\epsilon))}\frac{|V^1(x)||v_n^1(x+x_n^1)|}{|x+x_n^1|^2}\, dx.
\end{align*}
Since $x_n^1\longrightarrow 0$, there exists $n_1(\epsilon)$ such that, for all $n\geq n_1(\epsilon)$
$$ |x_n^1|< \frac{\delta(\epsilon)}{2}.$$
This implies for all $n\geq n_1(\epsilon)$
\begin{equation}
\label{est1}
\int_{B^{\textit{c}}(0,\delta(\epsilon))}\frac{|V^1(x)||v_n^1(x+x_n^1)|}{|x+x_n^1|^2}\, dx \leq \frac{4}{\delta(\epsilon)^2} \int |V^1(x)| |v_n^1(x+x_n^1)| \, dx. 
\end{equation}
Since $\int |V^1(x)| |v_n^1(x+x_n^1)| \, dx \underset{n \to \infty} {\longrightarrow}0$, there exists $n_3(\epsilon)$ such that for all $n\geq n_3(\epsilon)$
$$\int |V^1(x)| |v_n^1(x+x_n^1)| \, dx \leq \frac{\epsilon \delta(\epsilon)^2}{4}. $$ 
Combining the latter estimate with \eqref{est1}  one gets, for all 
$n\geq \textit{max} \, ( n_1(\epsilon),n_3(\epsilon)) $
\begin{equation}
\label{est on bc}
\int_{B^{\textit{c}}(0,\delta(\epsilon))}\frac{|V^1(x)||v_n^1(x+x_n^1)|}{|x+x_n^1|^2}\, dx \leq \epsilon. 
\end{equation}
Now, apply successively Cauchy-Schwarz and Hardy's inequalities to get 
$$\int_{B(0,\delta(\epsilon))}\frac{|V^1(x)||v_n^1(x+x_n^1)|}{|x+x_n^1|^2}\, dx \leq \left( \int_{B(0,\delta(\epsilon))} \frac{|V^1(x)|^2}{|x+x_n^1|^2} \, dx\right)^{\frac{1}{2}} \|\nabla v_n^1\|_{L^2}.$$ 
The sequence $\{ v_n^1\}$ is bounded in $\hs$, we infer that
\begin{equation}
\label{step1}
\int_{B(0,\delta(\epsilon))}\frac{|V^1(x)||v_n^1(x+x_n^1)|}{|x+x_n^1|^2}\, dx \lesssim \left( \int_{B(0,\delta(\epsilon))} \frac{|V^1(x)|^2}{|x+x_n^1|^2} \, dx\right)^{\frac{1}{2}} .
\end{equation}

We claim that there exists $n(\epsilon)$ such that for all $n \geq n(\epsilon)$
\begin{equation}
\label{step2}
\int_{B(0,\delta(\epsilon))} \frac{|V^1(x)|^2}{|x+x_n^1|^2} \, dx \leq \epsilon^2.
\end{equation}
Set\footnote{Note that $K(\epsilon,d)$ is nothing but the value of $\int_{B(0,2\delta(\epsilon))}\frac{1}{|x|^2}\, dx$} $K(\epsilon,d):=\frac{\sigma_d}{d-2}(2\delta(\epsilon))^{d-2}$, where $\sigma_d$ is the measure of $\mathbb{S}^{d-1}$.
The function $|V^1(\cdot)|^2 $ is continuous on the compact $\bar{B}(0,3\delta(\epsilon))$, hence uniformly continuous. That is, there exists $\alpha(\epsilon) \in (0,\delta(\epsilon) )$, such that, for all $x, y \in\bar{B}(0,3\delta(\epsilon))$
\[|x-y|<\alpha(\epsilon) \Rightarrow ||V^1(x)|^2-|V^1(y)|^2| < \frac{\epsilon^2}{ 2 K(\epsilon,d)}.\]
Since $x_n^1\longrightarrow 0$, there exists $n_2(\epsilon)$ such that, for all $n\geq n_2(\epsilon) $
$$ |x_n^1|< \alpha(\epsilon) <\delta(\epsilon).$$
So that, for all $x \in B(0,2\delta(\epsilon))$ and all $n\geq n_2(\epsilon) $ 
\begin{equation}
\label{basic}
||V^1(x-x_n^1)|^2-|V^1(x)|^2| < \frac{\epsilon^2}{ 2 K(\epsilon,d)}.
\end{equation}
The fact that, for all $n\geq n_2(\epsilon)$, \,  
$B(x_n^1,\delta(\epsilon))\subseteq B(0,2\delta(\epsilon))$, yields along with \eqref{basic}
\begin{align*}
\int_{B(x_n^1,\delta(\epsilon))} \frac{|V^1(x-x_n^1)|^2}{|x|^2} \, dx &\leq \int_{B(0,2\delta(\epsilon))} \frac{|V^1(x)}{|x|^2}\, dx+\frac{\epsilon^2}{2}, \quad \mbox{for all} \quad n\geq n_2(\epsilon).
\end{align*}
One obtains \eqref{step2} by applying estimate \eqref{tcd}.
At final, for all $n\geq n_2(\epsilon)$
\begin{equation}
\label{est on b}
\int_{B(0,\delta(\epsilon))}\frac{|V^1(x)||v_n^1(x+x_n^1)|}{|x+x_n^1|^2}\, dx \leq \epsilon .
\end{equation}
From \eqref{est on bc} and \eqref{est on b}, we have, for all $n\geq \textit{max} \, ( n_1(\epsilon),n_2(\epsilon),n_3(\epsilon)) $
\begin{equation*}
\bigg|\int_{\mathbb{R}^d}\frac{V^1(x)\bar{v}_n^1(x+x_n^1)}{|x+x_n^1|^2}\, dx \bigg| \leq \epsilon.
\end{equation*}
This achieves the proof of \eqref{main}.\\
Now, replace  ${\bf v}$  by ${\bf v}^{1}$ and repeat the same process. If $
\eta({\bf v}^1)>0$, one gets
$V^{2}$,  ${\bf x}^2$ and ${\bf v}^{2}$. Moreover, we have 
$$
|x_n^1-x_n^2|\y\infty\as.
$$
  Otherwise, up to extracting a subsequence, one gets
$$
x_n^1-x_n^2 \longrightarrow x_0
$$ 
for some $x_0\in\er^d$. Since
$$
    v_n^{1}(\cdot+x_n^2)=    v_n^{1}(\cdot +(x_n^2-x_n^1)+x_n^1)
$$
and 
 $    v_n^{1}(\cdot+x_n^1)$ converge weakly to $0$, then 
  $V^{2}=0$. Thus, $\eta ({\bf v}^{1})=0$, which is a
  contradiction.
An  argument of iteration and orthogonal extraction allows us to
  construct the family $\{ {\bf x}^{j} \}_{j=1}^{\infty}$ and  $\{V
  ^{j}\}_{j=1}^\infty$ satisfying the claims above. The rest of the proof remains the same as in \cite{Hk}, we omit the details. 
\end{proof}
As a consequence of Theorem \ref{prop4}, we get the following compactness lemma
\begin{Lemm}
\label{Prop3}Let $\{   v_n\}_{n=1}^\infty$  be a  bounded  family of  $\hi$-functions, such that
\begin{equation}
\label{dsd}
\limsup_{n\to\infty}H(v_n)\leq M\qquad{and}\qquad
\limsup_{n\to\infty} \|   v_n\|_{L^{\ti+2}}\geq m.
\end{equation}
Then, there exists  $\{ { x}_n\}_{n=1}^\infty\subset\er^d$ such that,  up to  a subsequence,
$$
   v_n(\cdot+x_n)\rightharpoonup V\qquad\quad \mbox{ in \quad $\hs$},
$$
 with
 $\|V\g\geq(\frac{d}{d+2})^{d/4}\frac{m^{\frac{d}{2}+1}}{M^{d/4}}\|Q\g.$
\end{Lemm}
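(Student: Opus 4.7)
The plan is to apply the profile decomposition of Theorem \ref{prop4} to $\{v_n\}$ and combine it with the sharp Gagliardo--Nirenberg inequality \eqref{dd}. Writing $v_n = \sum_{j=1}^{\ell} V^j(\cdot - x_n^j) + v_n^{\ell}$, I exploit three ingredients: (a) the $L^{\frac{4}{d}+2}$ norm almost decomposes along the profiles, since $\frac{4}{d}+2 \in (2, 2^*)$ and the translations are asymptotically orthogonal (by $|x_n^k - x_n^j| \to \infty$); (b) each profile contributes nonnegatively to $H(v_n)$ thanks to \eqref{hh} and the fact that $H$ is a semi-norm on $\hs$; (c) \eqref{dd} links the $L^{\frac{4}{d}+2}$ mass of each profile to its Hardy energy and $L^2$ mass.

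First, the vanishing of $v_n^{\ell}$ in $L^{\frac{4}{d}+2}$ from \eqref{ss} combined with the orthogonality of the cores gives, up to subsequences,
\begin{equation*}
m^{\frac{4}{d}+2} \leq \limsup_{n\to\infty}\|v_n\|_{L^{\frac{4}{d}+2}}^{\frac{4}{d}+2} = \sum_{j=1}^{\infty}\|V^j\|_{L^{\frac{4}{d}+2}}^{\frac{4}{d}+2}.
\end{equation*}
Applying \eqref{dd} to each translate $V^j(\cdot - x_n^j)$ and then passing to a diagonal subsequence so that $\widetilde H^j := \lim_{n} H(V^j(\cdot - x_n^j))$ exists for every $j$, I obtain
\begin{equation*}
\|V^j\|_{L^{\frac{4}{d}+2}}^{\frac{4}{d}+2} \leq C_d\, \widetilde H^j\, \|V^j\|_{L^2}^{\frac{4}{d}}.
\end{equation*}
The positivity of $H$ on $\hs$ together with \eqref{hh} gives $\sum_j \widetilde H^j \leq \limsup_n H(v_n) \leq M$. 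Since $\sum_j \|V^j\|_{L^2}^2 \leq \limsup_n \|v_n\|_{L^2}^2 < \infty$ by \eqref{ee}, the supremum of $\|V^j\|_{L^2}$ over $j$ is attained at some index $j_0$. Combining the three displays,
\begin{equation*}
m^{\frac{4}{d}+2} \leq \sum_{j} \|V^j\|_{L^{\frac{4}{d}+2}}^{\frac{4}{d}+2} \leq C_d\, \|V^{j_0}\|_{L^2}^{\frac{4}{d}}\, \sum_{j} \widetilde H^j \leq C_d\, M\, \|V^{j_0}\|_{L^2}^{\frac{4}{d}},
\end{equation*}
and solving for $\|V^{j_0}\|_{L^2}$ with $C_d = \frac{d+2}{d}\|Q\|_{L^2}^{-4/d}$ yields the announced bound. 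The required sequence is $\{x_n\} = \{x_n^{j_0}\}$ and the weak limit is $V = V^{j_0}$.

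The main technical point is the asymptotic orthogonality of the $L^{\frac{4}{d}+2}$ norms along the profile decomposition, which is not spelled out in the statement of Theorem \ref{prop4} but is the standard consequence of $|x_n^k - x_n^j| \to \infty$ together with Rellich--Kondrachov on compact sets, and is exactly what the author alludes to when he refers to \cite{Hk}. A secondary subtlety is that $H(V^j(\cdot - x_n^j))$ is not translation-invariant because of the inverse-square potential; however, only the positivity and the additivity \eqref{hh} of $H$ are needed here, and a routine subsequence extraction ensures that each $\widetilde H^j$ exists as an honest limit.
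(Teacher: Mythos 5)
Your proposal is correct and follows essentially the same route as the paper: profile decomposition, asymptotic orthogonality of the $L^{4/d+2}$ norms, the Gagliardo--Nirenberg inequality \eqref{dd} applied to the translated profiles together with the positivity and almost-additivity \eqref{hh} of $H$, and extraction of the profile of maximal $L^2$ mass. The only step you assert without justification is that $v_n(\cdot+x_n^{j_0})\rightharpoonup V^{j_0}$; the paper proves this by noting that the other profiles translate weakly to zero by orthogonality of the cores and that the remainder's weak limit vanishes by the $L^{\frac{4}{d}+2}$ decay \eqref{ss} and uniqueness of weak limits, but this is routine and your identification of $x_n$ and $V$ is the right one.
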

\begin{proof}
According to Proposition \ref{prop4}, the  sequence $\{   v_n\}_{n=1}^\infty$
can be written, up to  a subsequence, as    
\begin{equation}
 \nonumber
   v_n(x)=\sum_{j
     =1}^{\ell} V^{j}(x-x_n^j)+   v_n^{\ell}(x)
\end{equation}
such that \eqref{ss}, \eqref{ee} and \eqref{hh} hold. This implies, in particular,
\begin{equation}
  \nonumber
m^{\ti+2}\leq  \limsup_{n\to\infty}\Vert v_n\Vert_{{ L}^{\ti+2}}^{\ti+2}=   \limsup_{n\to\infty}\Vert \sum_{j=1}^{\infty}V^{j}(\cdot-x^j_n)\Vert_{{ L}^{\ti+2}}^{\ti+2}.
\end{equation}
The elementary inequality
   \begin{equation}
     \nonumber
     ||\sum_{j=1}^{l}a_j|^{4/d+2}-\sum_{j=1}^{l}|a_j|^{4/d+2}|\leq C\sum_{j\neq k}|a_j||a_k|^{4/d+1}.
   \end{equation}
along with the pairwise  orthogonality of the
family $\{{\bf x}^j\}_{j=1}^\infty $ leads the mixed terms in the sum above to vanish and we get
\begin{equation}
  \nonumber
m^{\ti+2}\leq\sum_{j=1}^{\infty}\Vert V^{j}\Vert_{{ L}^{\ti+2}}^{\ti+2}.
\end{equation}
We claim that
\begin{equation}
 \label{estimate}
  \sum_{j=1}^\infty\|V^{j}\|_{{ L}^{\ti+2}}^{\ti+2}  \leq C_d
  \sup\{\|V^{j}\g^{4/d}, j\geq1\} M .
\end{equation}
Indeed, let $\epsilon>0$. On the one hand, we have from (\ref{dsd})
$$ \exists N_{\epsilon} \quad \forall n \geq N_{\epsilon} \quad H(v_n) < M+\frac{\epsilon}{2}.$$
Let $l\geq 1$ be fixed. From $\eqref{hh}$, there exists $n(l,\epsilon)$ such that for all $n\geq n(l,\epsilon)$
$$ \bigg|H(v_n)- \sum_{j=1}^l H(V_n^j)- H(v_n^l) \bigg|< \frac{\epsilon}{2} ,$$
where $V_n^j(\cdot):=V^{j}(\cdot-x_n^j)$.
Thus, using the fact that the functional $H$ is positive, we obtain
$$\sum_{j=1}^l H(V_{N_{\epsilon}+n(l,\epsilon)}^j) \leq \sum_{j=1}^l H(V_{N_{\epsilon}+n(l,\epsilon)}^j)+ H(v_{N_{\epsilon}+n(l,\epsilon)}^l) \leq H(v_{N_{\epsilon}+n(l,\epsilon)})+\frac{\epsilon}{2}\leq M+\epsilon .$$

From the Gagliardo-Nirenberg inequality and the translation-invariance of the $L^p$-norms, one has, for all  $l\geq 1$ and all $\epsilon>0$
\begin{align*}
\sum_{j=1}^l \|V^{j}\|_{{ L}^{\ti+2}}^{\ti+2} &\leq C_d \quad 
  \sup\{\|V^{j}\g^{4/d}, j\geq1\} \quad \sum_{j=1}^l H(V_{N_{\epsilon}+n(l,\epsilon)}^j)\\
&\leq C_d \quad
  \sup\{\|V^{j}\g^{4/d}, j\geq1\} \quad  (M+\epsilon),
\end{align*}
that is
\[\sum_{j=1}^l \|V^{j}\|_{{ L}^{\ti+2}}^{\ti+2} \leq C_d
  \sup\{\|V^{j}\g^{4/d}, j\geq1\}  (M+\epsilon),\]
which proves \eqref{estimate}.
Therefore,
\begin{equation}
  \nonumber
 \sup_{j\geq 1}\|V^{j}\g^{4/d}\geq \frac{m^{\ti+2}}{M C_d}. 
\end{equation}
Since the series $\sum \|V^{j}\g^2$  converges, the supremum above  is attained.
Therefore, there exists $j_0$, such that
\begin{equation}
\nonumber
\|V^{j_0}\g\geq  \frac{m^{\frac{d}{2}+1}}{({C_d}M)^{d/4}}=
(\frac{d}{d+2})^{d/4}\frac{m^{\frac{d}{2}+1}}{M^{d/4}}\|Q\g . 
\end{equation}
 On the other hand, a change of variables gives for all $l \geq j_0$
\begin{eqnarray}
\nonumber
   v_n(x+x^{j_0}_n)=V^{j_0}(x)+
\sum_{{\tiny\begin{array}{ll}
1\leq j\leq \ell\\
j\neq j_0
\end{array}}}
{V}^{j}(x+x_n^{j_0}-x_n^j)+
\tilde{v}_{n}^{\ell}(x),
\end{eqnarray}
where $\tilde{v}_{n}^{\ell}(x)= {v}_{n}^{\ell}(x+x^{j_0}_n )$.
The pairwise  orthogonality of the
family $\{{\bf x}^j\}_{j=1}^\infty $ implies
\begin{equation}
  \nonumber
{V}^{j}(\cdot+x_n^{j_0}-x_n^j)\rightharpoonup 0 \qquad {\rm weakly } \quad {\rm in} \quad \hs
\end{equation}
for every $j\neq j_0$. Thus
\begin{eqnarray}
\nonumber
   v_n(\cdot+x^{j_0}_n)\rightharpoonup
V^{j_0}+
\tilde{v}^{\ell},
\end{eqnarray}
where $\tilde{v}^{\ell}$ denotes the weak limit of
$\{\tilde{v}_n^{\ell}\}_{n=1}^\infty$.
However, we have
\begin{eqnarray}
\nonumber
\|\tilde{v}^{\ell} \Vert_{{ L}^{\ti+2}} \leq \limsup_{n\to\infty}\|\tilde{v}^{\ell}_n \Vert_{{ L}^{\ti+2}} =\limsup_{n\to\infty}\|{v}^{\ell}_n \Vert_{{ L}^{\ti+2}}  \underset{l\to \infty}{\longrightarrow }0.
\end{eqnarray}
The uniqueness of the weak limit yields
\begin{eqnarray}
\nonumber
\tilde{v}^{\ell}=0
\end{eqnarray}
for every $\ell\geq j_0$ and then
\begin{eqnarray}
\nonumber
   v_n(\cdot+x^{j_0}_n)\rightharpoonup
V^{j_0}.
\end{eqnarray}
This closes the proof of the lemma.
\end{proof}
\section{$L^2$ concentration phenomenon} 
Now with Lemma \ref{dd} in hand, one can prove Theorem \ref{concen}.

\begin{proof}
Define
$$
 \rho(t):=\left(\frac{H(Q)   }{H(u(t,\cdot))}\right)^{\frac{1}{2}} \qquad{\rm and}\qquad   v(t,x):={   \rho(t)^{d/2}}u(t,{   \rho(t)}x).
$$ 
Let  $\{t_n\}_{n=1}^\infty$ be an arbitrary
sequence such that $t_n \uparrow T^*$. We set 
$ \rho_n=\rho(t_n)$ and  $   v_n=v(t_n,\cdot)$.  Since $u$ conserves its mass,
the sequence $\{   v_n\}_{n=1}^\infty$ satisfies
$$
\|   v_n\g= \|u_0\g\quad    \qquad{\rm and}\qquad     H(v_n)= H(Q).
$$
The conservation of energy and the blow-up criteria imply 
$$
\mathcal{E}(   v_n)={   \rho_n^2}\mathcal{E}(0)\y 0\as.
$$
In particular,
$$
\|   v_n\z^{\tit}\y \frac{d+2}{d} H(Q)\as.
$$
The family $\{   v_n\}_{n=1}^\infty$ satisfies the assumptions of Lemma \ref{Prop3} with 
$$
m^{\tit}=\frac{d+2}{d} H(Q)\qquad{\rm and}\qquad M=H(Q).
$$
It follows that
$$
\liminf_{n\to+\infty}  \int_{\{|x|\leq \alpha\}} \rho_n^d
|u(t_n,{\rho_{n}}x+x_{n})|^2dx\geq \int_{\{|x|\leq \alpha\}}| V|^2dx,
$$
for every $\alpha>0$. Thus,
 \begin{equation}
\nonumber
\liminf_{n\to+\infty}\sup_{y\in\mathbb R^d}\int_{\{|x-y|\leq \alpha \rho_n\}}|u(t_n,x)|^2dx\geq \int_{\{|x|\leq \alpha\}}|V|^2dx.
\end{equation}
The fact that $\frac{\rho_n}{\lambda(t_n)}\longrightarrow 0$, implies
$$ 
\liminf_{n\to+\infty}\sup_{y\in\mathbb R^d}\int_{\{|x-y|\leq a  (t_n)\}}|u(t_n,x)|^2dx\geq \int_{\{|x|\leq \alpha\}}|V|^2dx
$$
for every $\alpha>0$, which means that
\begin{equation}
\nonumber
\liminf_{n\to+\infty}\sup_{y\in\mathbb R^d}\int_{\{|x-y|\leq a   (t_n)\}}|u(t_n,x)|^2dx\geq \int_{\mathbb R^d}|V|^2dx\geq\int Q^2.
\end{equation}
Since the sequence $\{t_n\}$ is arbitrary we get finally
\begin{equation}
\nonumber
\liminf_{t\to T^*}\sup_{y\in\mathbb R^d}\int_{\{|x-y|\leq a   (t)\}}|u(t,x)|^2dx\geq \int Q^2.
\end{equation}
Since the function $y\longmapsto\int_{\{|x-y|\leq  a(t)\}}|u(t,x)|^2dx$ is continuous and goes to $0$ at infinity, there exists $x(t)$ such that
$$
\sup_{y\in\mathbb R^d}\int_{\{|x-y|\leq  a(t)   \}}|u(t,x)|^2dx=\int_{\{|x-x(t)|\leq  a  (t)\}}|u(t,x)|^2dx,
$$
which concludes the proof of Theorem \ref{concen}. 
\end{proof}
\section{Appendix}
\begin{Lemm}
Let $ d \geq 1$ be an integer. Let
 $\{u_n\}_{n\geq 0} $ be a sequence of $H^1(\mathbb{R}^d)$-functions such that
\begin{equation*}
u_n\rightharpoonup 0 \quad \mbox{in} \quad H^1(\mathbb{R}^d).  
\end{equation*}
Then we have
\begin{equation*}
|u_n|\rightharpoonup 0 \quad \mbox{in} \quad H^1(\mathbb{R}^d),  
\end{equation*}
where $|u_n| $ denotes the modulus of $u_n $.
\end{Lemm}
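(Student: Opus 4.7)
The strategy is to use the Kato-type inequality $|\nabla |u|| \le |\nabla u|$ to control $|u_n|$ in $H^1$, then combine local compactness (Rellich) with a subsequence-extraction argument to pin down the weak limit as $0$.

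\textbf{Step 1: boundedness of $\{|u_n|\}$ in $H^1$.} Recall the classical fact that if $u\in H^1(\mathbb{R}^d)$ then $|u|\in H^1(\mathbb{R}^d)$ and $|\nabla|u||\le |\nabla u|$ almost everywhere (for complex-valued $u$, this follows from regularising $|u|$ by $\sqrt{|u|^2+\varepsilon^2}$ and passing to the limit). Since $\|u_n\|_{H^1}$ is bounded by the weak-convergence hypothesis, the same bound applies to $\|\,|u_n|\,\|_{H^1}$.

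\textbf{Step 2: identification of every weak limit.} Let $\{|u_{n_k}|\}$ be any subsequence. By Step~1 it is bounded in $H^1$, so it admits a further subsequence (not relabelled) with $|u_{n_k}|\rightharpoonup w$ in $H^1(\mathbb{R}^d)$ for some $w$. On the other hand, since $u_n\rightharpoonup 0$ in $H^1$ and $H^1$ embeds compactly in $L^2_{\mathrm{loc}}$ (Rellich--Kondrachov), we have $u_{n_k}\to 0$ strongly in $L^2_{\mathrm{loc}}(\mathbb{R}^d)$, hence $|u_{n_k}|\to 0$ strongly in $L^2_{\mathrm{loc}}(\mathbb{R}^d)$. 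The weak convergence $|u_{n_k}|\rightharpoonup w$ in $H^1$ implies the same weak convergence in $L^2_{\mathrm{loc}}$, and uniqueness of limits forces $w=0$.

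\textbf{Step 3: passing from subsequences to the full sequence.} We have shown that every subsequence of $\{|u_n|\}$ has a further subsequence converging weakly to $0$ in $H^1(\mathbb{R}^d)$. Combined with the uniform $H^1$-bound, a standard Urysohn-type argument yields $|u_n|\rightharpoonup 0$ in $H^1(\mathbb{R}^d)$.

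\textbf{Main obstacle.} The only non-routine ingredient is Step~1, namely justifying $|u|\in H^1$ with the pointwise gradient bound when $u$ is complex-valued; everything else is a straightforward combination of Rellich compactness and uniqueness of weak limits.
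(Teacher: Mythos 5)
Your proof is correct and follows essentially the same route as the paper's: the diamagnetic (Kato) inequality gives the $H^1$ bound on $\{|u_n|\}$, the compact local embedding identifies the limit of the moduli as $0$, and uniqueness of weak limits concludes. Your version is in fact slightly more careful, since you make explicit the subsequence-extraction step needed to pass from ``some weak limit $v$'' to convergence of the full sequence, which the paper's proof glosses over.
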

\begin{proof}
Since $\{u_n\}_{n\geq 0} $ converges weakly to $0$ in $H^1(\mathbb{R}^d) $ and $H^1(\mathbb{R}^d)\hookrightarrow L_{\textit{loc}}^1(\mathbb{R}^d)$ with compact embedding, the sequence  $\{u_n\}_{n\geq 0} $ converges strongly to $0$ in $L_{\textit{loc}}^1(\mathbb{R}^d)$, so that $\{|u_n|\}_{n\geq 0} $ converges strongly to $0$ in $L_{\textit{loc}}^1(\mathbb{R}^d)$. On the one hand, we deduce from the preceding, using the Riesz representation theorem,
that $\{|u_n|\}_{n\geq 0} $ converges weakly to $0$ in $L^2(\mathbb{R}^d)$. On the other hand, the Diamagnetic inequality \cite{T}
\begin{equation*}
\int_{\mathbb{R}^d} |\nabla u|^2 dx \geq \int_{\mathbb{R}^d} |\nabla| u||^2 dx
\end{equation*}
which holds true for all $u \in H^1(\mathbb{R}^d)$, implies the existence of a function $v \in H^1(\mathbb{R}^d)$ such that $\{|u_n|\}_{n\geq 0} $ converges weakly in $H^1(\mathbb{R}^d)$ to $v$, and hence weakly in $L^2(\mathbb{R}^d)$ to $v$. The uniqueness of the weak limit implies that $v\equiv0$. This achieves the proof of the lemma.
\end{proof}
\section*{Acknowledgement}
This work was partially funded by the LABEX CEMPI.\\
The author would like to thank Pr. Sahbi Keraani, his thesis adviser, for his attention, suggestions, and constant encouragement.


\begin{thebibliography}{10}
\bibitem{Ca}Cazenave, T.\emph{``Semilinear Schr\"odinger equations.''} Courant Lecture Notes in Mathematics, {\bf 10}. New York University, Courant Institute of Mathematical Sciences, New York; American Mathematical Society, Providence, RI, 2003.
\bibitem{CG} Csobo, E. and F. Genoud,\emph{``Minimal mass blow-up solutions for the $L^2$-critical NLS with inverse-square potential.''}(2017): preprint arXiv:1707.01421
\bibitem{Hk} Hmidi, T. and S. Keraani.\emph{``Blow-up theory for the critical nonlinear Schr\"odinger equations revisited.''} International Mathematics Research Notices 46 (2005): 2815-2828.
\bibitem{KSW}Kalf, H., U.-W. Schmincke, J. Walter and R. W¨ust.\emph{``On the spectral theory of Schr\"odinger and Dirac operators
with strongly singular potentials.''} in: Spectral Theory and Differential Equations (Proceedings Symposium
Dundee, 1974), Lecture Notes in Mathematics, 448, Springer, (1975): 182-226.
\bibitem{KMVZZ} Killip, R., C. Miao, M. Visan, J. Zhang and J. Zheng.\emph{``Sobolev spaces adapted to the Schr\"odinger operator with
inverse-square potential.''} (2015): preprint arXiv:1503.02716
\bibitem{KMVZZ2}Killip, R., C. Miao, M. Visan, J. Zhang, J. Zheng.\emph{``The energy-critical NLS with inverse-square potential.''} Discrete and Continuous Dynamical Systems  37 (2017), 3831-3866.
\bibitem{MT}Merle, F. and Y.  Tsutsumi.\emph{``$L\sp 2$ concentration of blow-up solutions for the nonlinear Schr\"odinger equation with critical power nonlinearity.''}
Journal of Differential Equations 84, no. 2 (1990): 205--214.
\bibitem{Merle}Merle, F.\emph{``Lower bounds for the blow-up rate of solutions of the Zakharov equation in dimension
two.''} Communications in  Pure and Applied Mathematics 49 (1996): 765--794.
\bibitem{M} Montefusco, E.\emph{``Lower Semi-continuity of Functionals via the Concentration-Compactness Principle.''} J. of Mathematical Analysis and Applications 263 (2001): 264-276.
\bibitem{OSY} Okazawa, N., T. Suzuki and T. Yokota.\emph{``Energy methods for abstract nonlinear Schr\"odinger equations.''} Evolution Equations and
Control Theory 1 (2012): 337-354.
\bibitem{Ts}Tsutsumi, Y.\emph{``Rate of $L\sp 2$ concentration of blow-up solutions for the nonlinear Schr\"odinger equation with critical power.''}  Nonlinear Analysis  15, no. 8 (1990): 719--724.
\bibitem{T} Tao, T.\emph{``Nonlinear Dispersive Equations: Local and Global Analysis.''} CBMS Regional Conference Series in Mathematics 106, American Mathematical Society, 2006.
\bibitem{W}Weinstein, M. I.\emph{``On the structure and formation of singularities in solutions to the nonlinear
dispersive evolution equations.''}
Communications in Partial Differential Equations 11 (1986): 545-565.
\end{thebibliography}
\end{document}